\documentclass[reqno, 11pt, a4paper]{amsart} 

\usepackage[T5,T1]{fontenc}
\usepackage{mlmodern}

\usepackage[utf8]{inputenc}
\allowdisplaybreaks
\usepackage{amsfonts}
\usepackage{amsmath}
\usepackage{amssymb}
\usepackage{amsthm}
\usepackage[text={33pc,605pt},centering]{geometry}     

\usepackage{mathrsfs} 
\usepackage[dvipsnames]{xcolor}

\usepackage{bbm}
\usepackage{mathtools}

\usepackage{dsfont}
 
\usepackage[pagebackref=true,colorlinks=true, linkcolor=Blue, citecolor=Blue, pdfencoding=auto, psdextra]{hyperref}
\renewcommand*\backref[1]{\ifx#1\relax \else (Cited on #1) \fi}

\usepackage{appendix}
\usepackage{enumitem}
\usepackage{float}

\usepackage{tikz}
\usepackage{tikz-cd} 
\usetikzlibrary{arrows, arrows.meta}

\usepackage[nameinlink]{cleveref} 
\usepackage{scalerel}[2016/12/29]

\theoremstyle{plain}
\newtheorem{definition}{Definition}
\newtheorem{proposition}[definition]{Proposition}
\newtheorem{lemma}[definition]{Lemma}
\newtheorem{corollary}[definition]{Corollary}
\newtheorem{theorem}[definition]{Theorem}
\newtheorem{remark}[definition]{Remark}

\theoremstyle{definition}
\newtheorem{example}[definition]{Example}

\numberwithin{definition}{section}
\numberwithin{equation}{section}

\newcommand*{\R}{\mathbb{R}}

\newcommand*{\Z}{\mathbb{Z}}

\newcommand*{\Fcal}{\mathcal{F}}

\newcommand*{\Pcal}{\mathcal{P}}

\newcommand{\norm}[1]{\left \lVert  #1 \right \rVert}
\newcommand{\abs}[1]{\left\lvert #1 \right\rvert}

\newcommand*{\1}{\mathds{1}}

\renewcommand*{\d}{\mathrm{d}}
\newcommand*{\e}{\mathrm{e}}

\newcommand*{\SpecEnt}{\mathscr{I}}
\newcommand*{\RelEnt}{I}

\makeatletter
\newcommand{\subalign}[1]{%
  \vcenter{%
    \Let@ \restore@math@cr \default@tag
    \baselineskip\fontdimen10 \scriptfont\tw@
    \advance\baselineskip\fontdimen12 \scriptfont\tw@
    \lineskip\thr@@\fontdimen8 \scriptfont\thr@@
    \lineskiplimit\lineskip
    \ialign{\hfil$\m@th\scriptstyle##$&$\m@th\scriptstyle{}##$\hfil\crcr
      #1\crcr
    }%
  }%
}
\makeatother

\crefname{equation}{}{}

\title[Modified log-Sobolev, concentration, unique Gibbs measures]{Modified log-Sobolev inequalities, concentration bounds and uniqueness of Gibbs measures}

\author[Y. Steenbeck]{Yannic Steenbeck}

\address[Yannic Steenbeck]{TU Braunschweig, Institut für Mathematische Stochastik, 
Germany.}
\email{yannic.steenbeck@tu-braunschweig.de}

\keywords{Gibbs measures, modified logarithmic Sobolev inequalities, concentration inequalities, point processes, relative entropy, entropy dissipation, spatial birth and death processes}
\subjclass[2020]{82C21, 82B21, 60F10, 46E35; Secondary 60G55, 60E15, 60H07, 39B62, 60K35, 60J25}

\date{\today}

\usepackage{soul}

\begin{document}

\begin{abstract}
    We prove that there is only one translation-invariant Gibbsian point process w.r.t.\ to a chosen interaction if any of them satisfies a certain bound related to concentration-of-measure.
    This concentration-of-measure bound is e.g.\ fulfilled if a corresponding modified logarithmic Sobolev inequality holds.\ In particular, for natural examples with non-uniqueness regimes, a modified logarithmic Sobolev inequality cannot be satisfied. Therefore, in these situations, the free-energy dissipation in related continuous-time birth-and-death dynamics in \(\mathbb{R}^d\) is not exponentially fast.
\end{abstract}

\maketitle


\section{Modified logarithmic Sobolev inequalities, related concentration bounds and uniqueness of Gibbs measures via specific relative entropy distance}\label[section]{section:introduction_and_main_theorems}

There is a well-known meta-correspondence between certain functional inequalities, the so-called concentration-of-measure phenomenon and trend to equilibrium for related dynamics. Each of these subjects is of major interest on its own, but their connections can be particularly exciting.
Perhaps the most prominent example of functional inequalities in this context are the {\em logarithmic Sobolev inequalities} discovered by Leonard Gross \cite{Gross1975} in their simplest form reading as follows.
Letting \(\Phi \colon \mathbb{R}_+ \to \mathbb{R}\) be given via \(\Phi(x) = x \log x\), \(x \in \mathbb{R}_+\), we define
\begin{align*}
    \mathrm{Ent}_{\nu}[F]
    = \nu\big[\Phi(F)\big] - \Phi\big(\nu[F] \big),
\end{align*} where \(\nu\) is the standard Gaussian measure on \(\R^d\) and \(F \colon \R^d \to \R\) a smooth and compactly supported function.
Furthermore, the (formal) Dirichlet form corresponding to the {\em Ornstein--Uhlenbeck semigroup}, admitting the standard Gaussian measure \(\nu\) as reversible measure, is given by
\begin{align*}
    \mathcal{E}^{\nu}(F, G)
    = \nu\big[\nabla F \cdot \nabla G \big]
    = \nu\big[F \,(-\mathscr{L} G)\big],
\end{align*} with the (formal) generator
\begin{align*}
    (\mathscr{L} F)(x)
    = \Delta F(x) - x \cdot \nabla F(x).
\end{align*}
Then, the following logarithmic Sobolev inequality 
\begin{align}\label{equation:logarithmic_sobolev_inequality_Gross}
    \mathrm{Ent}_\nu[F^2]
    \leq C \,\mathcal{E}^{\nu}(F, F)
\end{align} holds with \(C = 1\).
It turns out that this inequality generalizes to probability measures on \(\R^d\) other than the standard Gaussian one. 
Ira Herbst observed that a necessary condition for \Cref{equation:logarithmic_sobolev_inequality_Gross} to hold for a probability measure \(\nu\) on \(\R^d\) is that
\begin{align*}
    \nu\big[ \e^{\lambda (F - \mu[F])} \big]
    \leq \exp\big(\tfrac{C}{4} \lambda^2  \norm{F}_{\mathrm{Lip}}^2\big)
\end{align*} holds for every \(\lambda \in \R\) and Lipschitz-continuous \(F\). The connection with concentration inequalities via Chernoff bounds, i.e.\ the exponential Markov inequality, is well-understood.
The deep links between this family of functional inequalities and concentration-of-measure bounds for reversible measures as well as convergence of associated dynamics to equilibrium are for example discussed in \cite{Ledoux1999}. Since then, much research has occurred in this direction and the general correspondence can be transferred to many different situations. 

We will now recall a tiny bit of the corresponding explorations in the context of {\em Poisson point processes}. But first let us recall what a Poisson point process \(\pi\) of intensity one on \(\R^d\) is and embed it into a more general setup of notation. We will use the shorthand \(\Lambda \Subset \R^d\) to denote that \(\Lambda \subseteq \R^d\) is bounded and Borel-measurable. Let \(\Omega\) be the set of all simple, locally finite counting measures (point configurations) on \(\R^d\), equipped with the \(\sigma\)-algebra \(\mathcal{F}\) which is induced by all the counting variables \(\{N_\Lambda \,\colon\, \Lambda \Subset \R^d\}\), where \(N_\Lambda \colon \Omega \to [0, \infty), \omega \mapsto \#(\omega \cap \Lambda)\). We may identify \(\eta \in \Omega\) with its support and write \(\eta_{\Lambda} = \eta \cap \Lambda\) for the restriction of \(\eta\) to \(\Lambda \subseteq \R^d\). Similarly, for \(\eta, \zeta \in \Omega\) the point configuration \(\eta_{\Lambda} \zeta_{\Delta} \in \Omega\) is then given by the union of all the points of \(\eta\) in \(\Lambda\) and all the points of \(\zeta\) in \(\Delta\). It is common to say that a measurable \(F \colon \Omega \to \R\) is {\em local} iff there is a \(\Lambda \Subset \R^d\) such that \(F = F(\cdot_\Lambda)\). We will occasionally use the notations \(\nu\big[ F(\eta) \big] = \nu\big[F \big]\) for the expectation \(\int \, F(\eta) \, \nu(\d \eta)\) of a measurable \(F \colon \Omega \to \R\) w.r.t.\ a probability measure \(\nu\) on \((\Omega, \Fcal)\); the integration variable is then always called \(\eta\).
Now let \(\Pcal_\theta = \Pcal_\theta(\Omega, \Fcal)\) be the space of translation-invariant probability measures on \((\Omega, \mathcal{F})\), i.e.\ of those probability measures on \((\Omega, \mathcal{F})\) which are invariant under all translation maps \((\theta_x)_{x \in \R^d}\), acting as \(\theta_x \colon \Omega \to \Omega, \, \omega = \sum_{i} \delta_{x_i} \mapsto \theta_x \omega := \sum_{i} \delta_{x_i - x} \).
Now, the {\em Poisson point process of intensity one} on \(\R^d\), here denoted by \(\pi\), is the unique element \(\nu\) of \(\Pcal_\theta\) such that, under \(\nu\), \(N_\Lambda\) is Poisson-distributed with mean \(\abs{\Lambda}\) (where \(\abs{\cdot}\) means Lebesgue-measure on \(\R^d\)) and \(N_{\Lambda_1}, \dots, N_{\Lambda_k}\) are independent for all \(k \geq 2\) and disjoint \(\Lambda, \Lambda_1, \dots, \Lambda_k \Subset \R^d\).

Now, in \cite{Wu2000}, the author proves several functional inequalities for Poisson point processes and derives concentration bounds for Poisson functionals from them. Among these functional inequalities, we want to recapitulate the following {\em modified logarithmic Sobolev inequality}. 
Replacing the differential operator \(\nabla\) in the Gaussian world, the discrete gradients \(D_x\), acting as
\begin{align*}
    (D_x F)(\eta)
    = F(\eta + \delta_x) - F(\eta),
\end{align*} now come into play and the corresponding Dirichlet form is given by
\begin{align*}
    \mathcal{E}^{\pi}(f, g)
    = \nu\bigg[\int_{\R^d} \, (D_x f) (D_x g) \, \d x \bigg].
\end{align*} 
Note that the definition of \(\mathrm{Ent}_{\mu}[F]\) as
\begin{align*}
    \mathrm{Ent}_{\mu}[F]
    := \mu\big[\Phi(F)\big] - \Phi\big(\mu[F] \big),
\end{align*} still makes sense for any probability measure \(\mu \in \Pcal_\theta\) and measurable functions \(F \colon \Omega \to (0, \infty)\) of sufficient integrability.
As already discussed in the introduction of \cite{Wu2000} and first noted in \cite{Surgailis1984}, the naïve translation 
\begin{align*}
    \mathrm{Ent}_{\pi}[F^2]
    \lesssim \mathcal{E}^{\pi}(F, F)
\end{align*} of \Cref{equation:logarithmic_sobolev_inequality_Gross} to the Poisson point process of intensity one \(\pi\) fails due to the strictly-fatter-than-Gaussian tails of Poisson random variables.
However, the following inequality does hold, c.f.\ {\cite[Corollary 2.2]{Wu2000}},
\begin{align}\label{equation:MLSI_wu2000}
    \mathrm{Ent}_{\pi}[F]
    \leq \mathcal{E}^{\pi}(F, \log F)
\end{align} for every \(F \in L^1(\nu)\) with \(F > 0\) \(\pi\)-a.s..
In fact, the work \cite{Wu2000} even contains improvements over this inequality.
Nevertheless, the inequality \Cref{equation:MLSI_wu2000} occurs very naturally in the context of the {\em Ornstein--Uhlenbeck semigroup} \((T_t)_{t \geq 0}\) (see e.g.\ \cite{Last2014} for some results related to this semigroup) and that is why we will focus on it.
There, it governs the trend to equilibrium and exponential decay of relative entropy as follows.
Given the (formal) generator 
\begin{align*}
    (\mathscr{L} F)(\eta)
    = \int_{\R^d} \, \{F(\eta + \delta_x) - F(\eta) \} \, \d x
    + 
    \sum_{x \in \eta} \{F(\eta - \delta_x) - F(\eta) \},
\end{align*} of \((T_t)_{t \geq 0}\), describing dynamics where points are independently born into space and die with unit exponential rate, we have indeed
\begin{align*}
    \mathcal{E}^{\pi}(F, G)
    = \nu[F \, (-\mathscr{L} G) ]
\end{align*} by the Mecke equation. 
One quantity which in a way measures distance to the Poisson point process \(\pi\), but also is of independent interest in statistical mechanics, is the {\em specific relative entropy } \(\SpecEnt(\mu \,\vert\, \pi)\) of a measure \(\mu \in \Pcal_\theta\) w.r.t.\ \(\pi\). It is given by
\begin{align*}
    \SpecEnt(\mu \,\vert\, \pi)
    = \inf_{\Lambda \Subset \mathbb{R}^d} \tfrac{1}{\abs{\Lambda}}  I(\mu_{\Lambda} \,\vert\, \pi_{\Lambda})
    = \lim_{n \to \infty} \tfrac{1}{\abs{\Lambda_n}} \RelEnt_{\Lambda_n}(\mu \,\vert\, \pi).
\end{align*} where \(\Lambda_n = [-n, n]^d\) for \(n \in \mathbb{N}\).
Herein and always in this work, we denoted by \(\mu_{\Lambda}\), \(\Lambda \Subset \R^d\), the restriction of \(\mu\) to the sub-\(\sigma\)-algebra \(\Fcal_\Lambda\) generated by the local counting variables \(\{N_{\Delta} \,\colon\, \Delta \Subset \R^d,\, \Delta \subseteq \Lambda\}\), and \(I_{\Lambda}(\mu \,\vert\, \nu) = I(\mu_\Lambda \,\vert\, \nu_\Lambda)\) with
\begin{align*}
    \RelEnt(P \,\vert Q)
    = \begin{cases}
        Q\big[\Phi(f) \big], &\text{ if } f = \frac{\d P}{\d Q} \text{ exists },\\
        \infty, & \text{otherwise}.
    \end{cases}
\end{align*}
A formal calculation now shows that
\begin{align}\label{equation:derivative_of_relative_entropy}
    \frac{\d}{\d t} \SpecEnt(\mu T_t \,\vert\, \pi)
    = \mathcal{E}^{\pi}(T_t f, \log T_t f ),
\end{align} where \(\mu \in \Pcal_\theta\) is another probability measure and \(f = \frac{\d \mu}{\d \pi}\).
Hence, the modified logarithmic Sobolev inequality \Cref{equation:MLSI_wu2000} is (morally) equivalent to exponential decay of the (specific) relative entropy along trajectories of the Ornstein--Uhlenbeck semigroup.

A generalization of the above situation can be made to the world of {\em Gibbs measures}, a vast landscape of mathematical statistical mechanics and probability theory. Textbook references are e.g.\ \cite{Georgii2011, Dereudre2019}.
Recall that a (translation-invariant) {\em Gibbs point process}, or sometimes just {\em Gibbs measure}, is an element \(\nu\) of \(\Pcal_\theta\) which satisfies the {\em DLR equations} 
\begin{equation*}
    \int \, f(\omega) \, \nu(\d \omega)
    =
    \int \bigg[ \int  f(\eta) \, G_{\Lambda, \omega}(\d \eta) \bigg] \nu(\d \omega)
\end{equation*} for every measurable \(f \geq 0\) and every \(\Lambda \Subset \R^d\). Herein, \(G_{\Lambda, \omega}\) is the probability measure defined by
\begin{align*}
    \int \, f(\eta) \, G_{\Lambda, \omega}(\d \eta)
    = Z_{\Lambda, \omega}^{-1} \int \, f(\eta_\Lambda \omega_{\Lambda^c}) \, \e^{- H_\Lambda(\eta_\Lambda \omega_{\Lambda^c})} \, \pi(\d \eta)
\end{align*} where \(\pi\) is the Poisson point process of intensity one, \(Z_{\Lambda, \omega}^{-1}\) is a normalization constant and 
\begin{align*}
    H_\Lambda(\eta)
    := \lim_{n \to \infty} \big(H(\eta_{\Lambda_n})-H(\eta_{\Lambda_n\setminus\Lambda})\big)
\end{align*} is the conditional energy associated with a (translation-invariant) {\em energy function } \(H\). 
We will not consider issues of well-definedness and existence of Gibbs measures in the present work. Let us just mention two examples of energy functions for which these properties are well-known.
\begin{example}[Very nice pair potentials]\label[example]{example:very_nice_pair_potentials}
    Let \(\varphi \colon \R^d \to \R_+\) be a non-negative, compactly supported and even function. We can then consider the energy function
    \begin{align*}
        H(\omega) = \sum_{\substack{\{x,y\} \subseteq \omega, \\ x \neq y}} \varphi(x-y)
    \end{align*} for totally finite point configurations \(\omega\).
\end{example}
\begin{example}[Area interaction]\label[example]{example:area_interaction}
    Let \(R > 0\) and \(\gamma \in \R\setminus\{0\}\) be some fixed number.
    We can then consider the energy function
    \begin{align*}
        H(\omega) = \gamma \vert B_R(\omega)\vert
    \end{align*} for totally finite point configurations \(\omega\) and \(B_R(\omega) = \bigcup_{x \in \omega} B_R(x)\), \(B_R(x) = \{y \in \R^d \colon \abs{x-y} \leq R\}\). It is well-known, compare with e.g.~\cite{Ruelle1971, Giacomin1995, CCK1995}, that there are parameters \(\gamma > 0, R > 0\) such that more than one Gibbs measures exists for the corresponding area interaction \(H\) given above.
\end{example}
\begin{example}[Superstable pair interactions]\label[example]{example:superstable_pair_potentials}
    We use the same definitions as in \cite{Georgii1994}.
    Here, the energy function is again given as
    \begin{align*}
        H(\omega) = \sum_{\substack{\{x,y\} \subseteq \omega, \\ x \neq y}} \varphi(x-y)
    \end{align*} for totally finite point configurations \(\omega\), but with a much more general class of \(\varphi \colon \R^d \to \R\) than in \Cref{example:very_nice_pair_potentials}.
    We assume \(\varphi\) to be {\em superstable} and {\em regular}.
    This is e.g.\ fulfilled if \(\varphi\) is {\em non-integrably divergent at the origin} and {\em regular}, meaning respectively that there is some decreasing function \(\chi \colon (0, \infty) \to \mathbb{R}_+\) with \(\int_0^1 \, \chi(r) \, r^{d-1} \, \d r = \infty\) such that \(\varphi(x) \geq \chi(\abs{x})\) whenever \(\abs{x}\) is small enough and that there is some decreasing function \(\psi \colon \mathbb{R}_+ \to \mathbb{R}_+\) with \(\int_{0}^{\infty} \, \psi(r) \, r^{d-1} \, \d r < \infty\) such that
    \begin{enumerate}
        \item \(\varphi(x) \geq  -\psi(\abs{x})\) for all \(x \in \mathbb{R}^d\),
        \item \(\varphi(x) \leq \psi(\abs{x})\) for some \(r(\varphi) < \infty\) and all \(x \in \mathbb{R}^d\) with \(\abs{x} \geq r(\varphi)\).
    \end{enumerate}
    For well-definedness of the conditional energies and hence finite-volume Gibbs measures, we consider the class \(\Omega^\ast \subset \Omega\) of {\em tempered configurations}
    \begin{align*}
        \Omega^\ast 
        := \bigcup_{t > 0} \,\bigg\{\sum_{i \in \Lambda_n} N_{i + [-1,1]^d}^2 \,\leq\, t\abs{\Lambda_n} \,\, \text{ for all } n \in \mathbb{N} \bigg\}.
    \end{align*}
    We will call a probability measure \(\nu \in \Pcal_\theta\) a {\em tempered Gibbs measure} if it is a {\em tempered probability measure}, i.e.\ if \(\nu(\Omega^\ast) = 1\), and a Gibbs measure using the definition via DLR equations given above. 
\end{example}

Now, if we denote by \(h(x, \eta)\) the {\em conditional energy}
\begin{equation*}
    h(x,\eta)
    := \lim_{n\uparrow\infty}\big(H(\eta_{\Lambda_n}+\delta_x) - H(\eta_{\Lambda_n})\big)
\end{equation*} of a point \(x\in\R^d\) in a configuration \(\eta\in\Omega\), the {\em birth rate} \(b\) is given by
\begin{equation*}
    b(x,\eta) := \e^{-h(x,\eta)}.
\end{equation*} In point process language, what we call a birth rate here is a {\em Papangelou intensity}.
It gives rise to the following formal generator 
\begin{align*}
    (\mathscr{L} F)(\eta)
    = \int_{\R^d} \, b(x, \eta) \,\{F(\eta + \delta_x) - F(\eta) \} \, \d x
    + 
    \sum_{x \in \eta} \{F(\eta - \delta_x) - F(\eta) \},
\end{align*} which, at least formally, generates a semigroup admitting the corresponding Gibbs measures as reversible measures. This reversibility is now a consequence of the {\em GNZ equations} in place of the Mecke equation. 
Rigorous results regarding such continuum {\em birth-and-death dynamics} in that direction are contained e.g.\ in the articles \cite{JKSZ25, JKSZ26}. 

Now, we see as in \Cref{equation:derivative_of_relative_entropy} that a relevant quantity regarding the decay of relative entropy w.r.t.\ \(\nu\), along the trajectories of the dynamics associated with \(\mathscr{L}\), is given by the formal Dirichlet form expression
\begin{align}\label{equation:formal_dirichlet_form_birth_death}
    \mathcal{E}^{\nu}(F, \log F)
    = \nu\big[F (-\mathscr{L} \log F) \big]
    = \nu\bigg[\int_{\R^d} \, b(x, \cdot) \, ( D_x F ) (D_x \log F) \, \d x \bigg].
\end{align}

Note that in \Cref{example:very_nice_pair_potentials} and \Cref{example:area_interaction}, the birth rate \(b\) is bounded.
Hence, at least in these settings, the following inequality will be the relevant one.
We will say that \(\nu\) satisfies the modified logarithmic Sobolev inequality ({\bf MLSI--1}) with constant \(c_\nu > 0\), if
\begin{align}
    \mathrm{Ent}_{\nu}[F]
    \leq c_\nu \, \nu\bigg[\int_{\R^d} \, \, ( D_x F ) (D_x \log F) \, \d x \bigg]
\end{align} for all bounded local \(F > 0\).

Finite-volume versions of this inequality are, among other things, discussed in the article \cite{DaiPraPosta2013}. As an example, it is shown there that Gibbs measures for \Cref{example:very_nice_pair_potentials} obey such a bound for sufficiently low interaction strength (e.g.\ in  high-temperature regimes). In \cite{JKSZ26}, these techniques are applied to say the same for \Cref{example:area_interaction} and sufficiently high temperatures.

In the case that the birth rate \(b\) is unbounded, ({\bf MLSI--1}) does not follow from 
\begin{align}\label{equation:MLSI_correct_Dirichlet_form}
    \mathrm{Ent}_{\nu}[F]
    \leq c_\nu \, \mathcal{E}^{\nu}(F, \log F)
    = c_\nu \, \nu\bigg[\int_{\R^d} \, b(x, \cdot) \, (D_x F) \, (D_x \log F) \, \d x \bigg]
\end{align} for all bounded local \(F > 0\). In case that \Cref{equation:MLSI_correct_Dirichlet_form} holds for all bounded local \(F > 0\), we will say that \(\nu\) satisfies the the modified logarithmic Sobolev inequality ({\bf MLSI--b}) with constant \(c_\nu > 0\).

At high temperatures, there will be, as is well-known for these examples, just one Gibbs measure. On the other hand, for \Cref{example:area_interaction} also a phase transition, i.e.\ non-uniqueness of Gibbs measures at some parameters, is well-known.

Now, a natural question is:  
{\em Does the fact that some Gibbs measure \(\nu\) satisfies a ({\bf MLSI--1}) or a ({\bf MLSI--b})  with constant \(c_\nu > 0\) already imply that no other Gibbs measure w.r.t.\ the same chosen interaction (and parameters) exists?}

In \cite{CMRU2020, CR2022} the authors presented the following idea in the context of statistical mechanics of lattice systems, which we shall adapt to the present subject of investigation.
According to the {\em Gibbs variational principle}, the specific relative entropy of a Gibbs measure \(\mu\) w.r.t.\ another Gibbs measure \(\nu\) for the same specification is zero:
\begin{align*}
    \SpecEnt(\mu \,\vert\, \nu) 
    = \lim_{n \to \infty} \tfrac{1}{\abs{\Lambda_n}} \RelEnt(\mu_{\Lambda_n} \,\vert\, \nu_{\Lambda_n})
    = 0
\end{align*} and \(\SpecEnt(\widetilde{\mu} \,\vert\, \nu) = 0\) for another probability measure \(\widetilde{\mu}\) already implies that \(\widetilde{\mu}\) is in fact a Gibbs measure too.
Hence, if \(\nu\) is any Gibbs measure, an equivalent condition to its uniqueness is \(\SpecEnt(\mu \,\vert\, \nu)  > 0\) for every \(\mu \in \Pcal_\theta \setminus \{\nu\}\).
Employing the Donsker--Varadhan formula for the relative entropy, we see that
\begin{align*}
    \tfrac{1}{\abs{\Lambda_n}} \RelEnt(\mu_{\Lambda_n} \,\vert\, \nu_{\Lambda_n})
    &\,\geq\,
    \frac{1}{\abs{\Lambda_n}} \Big\{ \Big( \mu\big[F_n \big] - \nu\big[F_n \big] \Big) - \log \nu\big[ \e^{F_n - \nu[F_n]} \big] \Big\}
\end{align*} for every measurable \(F_n\) of our choice. Inspecting the right-hand side of this inequality, we can already see that it will be uniformly bounded from below if we are able to construct \((F_n)_{n \in \mathbb{N}}\) with roughly the following properties
\begin{enumerate}[topsep=0.3cm, itemsep=0.2cm]
    \item \( \tfrac{1}{\abs{\Lambda_n}} \mu[F_n] - \tfrac{1}{\abs{\Lambda_n}} \nu[F_n] \gg 0\),
    \item \(F_n\) concentrates sufficiently well, on an exponential scale, around its mean \(\nu[F_n]\) under \(\nu\).
\end{enumerate} The first property is easily achieved by choosing a bounded local observable \(f\) which separates the distributions \(\mu, \nu\) as in \(\mu[f] - \nu[f] > 0\) and then, exploiting stationarity, looking at its space-averages \(F_n = \int_{\Lambda_n} \, f \circ \theta_x \, \d x\). For the second property, we will have to use the corresponding concentration assumptions on \(\nu\) and take more care with the initial choice of \(f\) to apply them.

The contribution of this small work is the insight that ideas from \cite{CMRU2020, CR2022} carry over from the lattice situation to the continuum point process setting, also replacing the Gaussian concentration assumption with a Poisson-type one, and the technical work needed to enable this.

Indeed, we present the following
\begin{theorem}[MLSI--1 implies strictly positive specific relative entropy distance]\label[theorem]{theorem:MLSI_implies_distance_in_specific_relative_entropy}
    Let \(\nu \in \Pcal_\theta\) satisfy ({\bf MLSI--1}) with some constant \(c_\nu > 0\).
    Then, \(\SpecEnt(\mu \,\vert\, \nu) > 0\) for all \(\mu \in \Pcal_\theta \setminus \{\nu\}\).
\end{theorem}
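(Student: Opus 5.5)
We follow the Donsker--Varadhan strategy sketched just before the statement. Fix \(\mu \in \Pcal_\theta\setminus\{\nu\}\). The first step is to choose a separating local observable \(f\). Since \(\mu \neq \nu\) and both are determined by their local finite-dimensional distributions, there is \(\Delta \Subset \R^d\) with \(\mu_\Delta \neq \nu_\Delta\). We then want \(f\) to be bounded, local on \(\Delta\), and with \(\mu[f] > \nu[f]\). We also want a bounded discrete gradient, \(\sup_{x,\eta}\abs{D_x f(\eta)} \le L\), together with \(\sup_\eta\int_{\R^d} \abs{D_x f(\eta)}\,\d x \le |\Delta| L\). Both are automatic because \(D_x f = 0\) for \(x \notin \Delta\).

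Next we spatially average. For \(x \in \R^d\) let \(f_x = f\circ\theta_x\), and set \(F_n = \int_{\Lambda_n} f_x \, \d x\). Stationarity gives \(\mu[F_n]-\nu[F_n] = \abs{\Lambda_n}\,(\mu[f]-\nu[f]) =: \abs{\Lambda_n}\,\delta\) with \(\delta>0\). The gradient bounds transfer as follows:
\begin{align*}
\abs{D_y F_n} \le \int \abs{D_y f_x}\,\d x \le |\Delta| L =: \beta,
\qquad
\int \abs{D_y F_n}^2\,\d y \le \beta \int\!\!\int \abs{D_y f_x}\,\d y\,\d x \le \beta^2 \abs{\Lambda_n}.
\end{align*}
So \(F_n\) has uniformly bounded increments and a "variance proxy" of order \(\abs{\Lambda_n}\).

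The main step is a Herbst-type argument. We derive from (\textbf{MLSI--1}) a Poisson-type exponential moment bound: for any bounded local \(G\) with \(\abs{D_y G}\le\beta\) and \(\int (D_yG)^2\,\d y \le V\),
\begin{align*}
\log \nu\big[\e^{\lambda(G-\nu[G])}\big] \le c_\nu\, V\,\psi(\lambda\beta)/\beta^2 \quad \text{for } \lambda\ge0,
\end{align*}
with \(\psi\) something like \(\psi(s)=s(\e^s-1)\). To prove it, apply (\textbf{MLSI--1}) to \(F=\e^{\lambda G}\). Then \((D_xF)(D_x\log F) = \lambda D_xG\,\e^{\lambda G}(\e^{\lambda D_xG}-1)\), and using \(\abs{u(\e^{u}-1)} \le u^2 \e^{\abs{u}}\) one gets
\[
\mathrm{Ent}_\nu[\e^{\lambda G}] \le c_\nu\lambda^2\e^{\lambda\beta}V\,\nu[\e^{\lambda G}].
\]
The standard Herbst differential inequality for \(H(\lambda)=\lambda^{-1}\log\nu[\e^{\lambda G}]\) then gives
\[
\log\nu[\e^{\lambda(G-\nu G)}]\le c_\nu V\lambda \int_0^\lambda \e^{s\beta}\,\d s.
\]
Here \(F_n\) is local and bounded since \(f\) is, so it is admissible.

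For the conclusion, Donsker--Varadhan with \(\lambda F_n\) gives
\[
\tfrac{1}{\abs{\Lambda_n}}\RelEnt_{\Lambda_n'}(\mu\,\vert\,\nu) \ge \lambda\delta - c_\nu\beta^2\lambda^2\e^{\lambda\beta}.
\]
Here \(\Lambda_n'=\Lambda_n+\Delta\) accounts for \(F_n\) being \(\Fcal_{\Lambda_n'}\)-measurable, and the ratio \(|\Lambda_n'|/|\Lambda_n|\to1\). Choosing \(\lambda>0\) small makes the right side a positive constant, so \(\SpecEnt(\mu\,\vert\,\nu)>0\); if the relative entropies are infinite the bound is trivial. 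We expect the main obstacle to be the Herbst step: justifying integrability and differentiability in \(\lambda\) of \(\nu[\e^{\lambda F_n}]\). Boundedness of \(F_n\) settles this, but the inequality \(\abs{u(\e^u-1)}\le u^2\e^{\abs u}\) must be handled carefully for negative increments, where \(\e^{\lambda G}\) appears with shifted arguments. We would absorb this by bounding \(\e^{\lambda G(\eta+\delta_x)} \le \e^{\lambda\beta}\e^{\lambda G(\eta)}\).
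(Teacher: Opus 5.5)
Your proposal is correct and follows essentially the same route as the paper: a separating bounded local observable, its spatial average \(F_n\) with \(\abs{D_yF_n}\le\beta\) and \(\int\abs{D_yF_n}^2\,\d y\le\beta^2\abs{\Lambda_n}\), the Herbst argument applied to \(\e^{\lambda F_n}\) under (\textbf{MLSI--1}), and Donsker--Varadhan with small \(\lambda>0\). The differences are only cosmetic. You take an arbitrary bounded \(\Fcal_\Delta\)-measurable \(f\) instead of the paper's \(\pm\e^{-\sum_{x\in\eta}g(x)}\), and you bound \(\int\abs{D_yF_n}^2\,\d y\) by a sup times an \(L^1\)-norm instead of via Young's inequality. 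Your worry about negative increments is unfounded, since \(D_x\e^{\lambda G}=\e^{\lambda G(\eta)}(\e^{\lambda D_xG}-1)\) factors out exactly and \(u(\e^u-1)\le u^2\e^{\abs{u}}\) holds for all real \(u\).
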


This theorem now yields the following easy corollaries.
\begin{corollary}
    Let the Gibbs variational principle hold in the following sense: if \(\nu \in \Pcal_\theta\) is a Gibbs measure w.r.t.\ the fixed energy function \(H\) and \(\mu \in \Pcal_\theta\) is any probability measure with \(\SpecEnt(\mu \,\vert\, \nu) = 0\), then \(\mu\) is Gibbs measure w.r.t.\ \(H\) too.
    Then, if any Gibbs measure \(\nu\) w.r.t.\ \(H\) satisfies ({\bf MLSI--1}) with some constant \(c_\nu > 0\), there is no other Gibbs measure w.r.t.\ \(H\).
\end{corollary}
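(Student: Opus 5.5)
The statement to prove is the Corollary: if the Gibbs variational principle holds and some Gibbs measure \(\nu\) for \(H\) satisfies ({\bf MLSI--1}), then \(\nu\) is the only Gibbs measure for \(H\). The argument is a direct contradiction using \Cref{theorem:MLSI_implies_distance_in_specific_relative_entropy} together with the easy half of the Gibbs variational principle.

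Fix a Gibbs measure \(\nu \in \Pcal_\theta\) w.r.t.\ \(H\) satisfying ({\bf MLSI--1}) with constant \(c_\nu > 0\). Suppose, for contradiction, that \(\mu \in \Pcal_\theta\) is another Gibbs measure w.r.t.\ \(H\) with \(\mu \neq \nu\). By \Cref{theorem:MLSI_implies_distance_in_specific_relative_entropy}, applied to \(\nu\), we get \(\SpecEnt(\mu \,\vert\, \nu) > 0\).

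The other input is the statement recalled in the introduction: for two Gibbs measures \(\mu,\nu\) of the same specification, \(\SpecEnt(\mu \,\vert\, \nu) = 0\). This contradicts \(\SpecEnt(\mu \,\vert\, \nu) > 0\), so no such \(\mu\) exists and \(\nu\) is the unique Gibbs measure in \(\Pcal_\theta\).

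The only delicate point is which direction of the variational principle each step uses. The hypothesis stated in the Corollary is the converse direction: \(\SpecEnt(\mu \,\vert\, \nu) = 0\) implies that \(\mu\) is Gibbs. The contradiction above instead needs the forward direction, that Gibbs implies zero specific relative entropy. I would take that direction as part of the Gibbs variational principle as recalled in the introduction, where it is asserted for Gibbs measures of the same specification. To make this precise, I would write \(\RelEnt(\mu_{\Lambda_n} \,\vert\, \nu_{\Lambda_n})\) via the DLR equations as a difference of specific free energies plus boundary terms. These boundary terms are \(o(\abs{\Lambda_n})\) in the standard examples.

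Equivalently, the conclusion reads: every \(\mu \in \Pcal_\theta\setminus\{\nu\}\) has \(\SpecEnt(\mu \,\vert\, \nu)>0\), hence is not Gibbs. The assumed converse direction is what guarantees, in the intended formulation, that the zero set of \(\SpecEnt(\cdot \,\vert\, \nu)\) is exactly the set of Gibbs measures. Uniqueness is therefore the same as this zero set being \(\{\nu\}\), which is what \Cref{theorem:MLSI_implies_distance_in_specific_relative_entropy} delivers.
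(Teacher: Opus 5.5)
Your argument is correct and is essentially the paper's (implicit) proof of this ``easy corollary'': \Cref{theorem:MLSI_implies_distance_in_specific_relative_entropy} gives \(\SpecEnt(\mu \,\vert\, \nu) > 0\) for every \(\mu \in \Pcal_\theta \setminus \{\nu\}\). That contradicts the fact, recalled in the introduction, that two Gibbs measures for the same specification have zero specific relative entropy with respect to each other. You are also right that this needs the forward direction of the variational principle (Gibbs implies \(\SpecEnt(\mu \,\vert\, \nu) = 0\)), not the converse direction written into the corollary's hypothesis, which the conclusion does not actually use; the paper likewise takes that forward direction from its introduction without proof, so your reliance on it is no weaker than the paper's.
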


\begin{corollary}
    There are parameters \(\gamma \in \R\setminus\{0\}\) and \(R > 0\) such that none of the Gibbs measures \(\nu\) (which are more than one) w.r.t.\ the interaction in \Cref{example:area_interaction} can satisfy ({\bf MLSI--1}) with some constant \(c_\nu > 0\). 
\end{corollary}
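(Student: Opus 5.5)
The plan is to argue by contradiction, combining \Cref{theorem:MLSI_implies_distance_in_specific_relative_entropy} with the first half of the Gibbs variational principle, namely that distinct Gibbs measures are at zero specific relative entropy from each other. No new analytic estimate is needed. The work lies in choosing the parameters and in checking that the variational principle applies to the area interaction of \Cref{example:area_interaction}.

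\emph{Step 1: choice of parameters.} I would take \(\gamma > 0\) and \(R > 0\) in the phase-transition regime from \cite{Ruelle1971, Giacomin1995, CCK1995}. That is, I would fix parameters for which there exist at least two distinct \emph{translation-invariant} Gibbs measures \(\nu_1 \neq \nu_2\) in \(\Pcal_\theta\).
\begin{itemize}
\item The cited works are phrased in terms of an activity \(z\) and the Widom--Rowlinson representation. Since our reference process has intensity one, I would pass to their setting by rescaling space, \(x \mapsto x/R\). This turns intensity one into activity \(R^d\) and \(\gamma\) into \(\gamma R^d\), and so produces a regime with parameters \((\gamma, R)\) in our normalisation.
\item The coexisting phases built there are limits of translation-invariant (e.g.\ periodic or monochromatic) boundary conditions, so they can be taken in \(\Pcal_\theta\). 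If needed, I would average a non-invariant phase over translations of a fundamental domain and use that the set of translation-invariant Gibbs measures is convex and closed.
\end{itemize}

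\emph{Step 2: zero specific relative entropy between Gibbs measures.} For the area interaction, the conditional energy \(h(x,\eta) = \gamma\,\abs{B_R(x) \setminus B_R(\eta)}\) is bounded: it takes values in \([0, \gamma\abs{B_R(0)}]\). It also has finite range \(2R\). I would then show \(\SpecEnt(\mu \,\vert\, \nu) = 0\) for any two translation-invariant Gibbs measures \(\mu, \nu\) by the standard route (e.g.\ \cite{Georgii2011, Dereudre2019}).
\begin{itemize}
\item First write
\[
\tfrac{1}{\abs{\Lambda_n}}\RelEnt(\mu_{\Lambda_n} \,\vert\, \nu_{\Lambda_n}) = \tfrac{1}{\abs{\Lambda_n}}\RelEnt(\mu_{\Lambda_n} \,\vert\, \pi_{\Lambda_n}) + \tfrac{1}{\abs{\Lambda_n}}\,\mu\big[-\log \tfrac{\d \nu_{\Lambda_n}}{\d \pi_{\Lambda_n}}\big].
\]
\item By the DLR equations, \(\frac{\d \nu_{\Lambda_n}}{\d\pi_{\Lambda_n}}(\eta)\) is a \(\nu\)-average of \(Z_{\Lambda_n,\omega}^{-1}\e^{-H_{\Lambda_n}(\eta_{\Lambda_n}\omega_{\Lambda_n^c})}\). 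The boundedness and finite range of the interaction give \(H_{\Lambda_n}(\eta_{\Lambda_n}\omega) = H(\eta_{\Lambda_n}) + O(\abs{\partial_{2R}\Lambda_n})\) uniformly, and likewise for \(\log Z_{\Lambda_n,\omega}\). The boundary layer has volume \(o(\abs{\Lambda_n})\).
\item Hence the right-hand side equals \(\SpecEnt(\mu\,\vert\,\pi) + \mu\text{-specific energy} + p\) up to \(o(1)\), where \(p\) is the pressure. This is the free-energy functional, which vanishes at every translation-invariant Gibbs measure.
\end{itemize}
Alternatively, I would simply quote the variational principle for this model, as the first corollary does in its hypothesis.

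\emph{Step 3: conclusion.} Suppose some Gibbs measure \(\nu\) for these parameters satisfies (\textbf{MLSI--1}) with a constant \(c_\nu > 0\). Because there are at least two translation-invariant Gibbs measures, there is a Gibbs measure \(\mu \in \Pcal_\theta\setminus\{\nu\}\).
\begin{itemize}
\item \Cref{theorem:MLSI_implies_distance_in_specific_relative_entropy} gives \(\SpecEnt(\mu \,\vert\, \nu) > 0\).
\item Step 2 gives \(\SpecEnt(\mu\,\vert\,\nu) = 0\).
\end{itemize}
This is a contradiction. Since \(\nu\) was an arbitrary Gibbs measure, none of them satisfies (\textbf{MLSI--1}).

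I expect the main obstacle to be Step 1: making sure the cited phase-transition results really yield two distinct \emph{translation-invariant} Gibbs measures in our normalisation (reference intensity one, energy \(\gamma\abs{B_R(\cdot)}\)). Step 2 is routine because of the boundedness and finite range of \(h\).
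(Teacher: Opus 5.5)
Your proposal is correct and follows the route the paper intends. The paper states this corollary without a separate proof, as an immediate consequence of \Cref{theorem:MLSI_implies_distance_in_specific_relative_entropy}, the cited non-uniqueness results for the area interaction, and the fact (quoted in the introduction from the Gibbs variational principle) that two Gibbs measures for the same interaction satisfy \(\SpecEnt(\mu \,\vert\, \nu) = 0\). Your Step~2 supplies that last fact directly, and applying your uniform boundary estimate to both \(\mu\) and \(\nu\) already gives \(\frac{\d\mu_{\Lambda_n}}{\d\nu_{\Lambda_n}} \le \e^{2\gamma\abs{\partial_{2R}\Lambda_n}}\), so you need not appeal separately to the vanishing of the free-energy functional.
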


In the case of unbounded birth rates, the following theorem becomes relevant.
\begin{theorem}[MLSI--b implies strictly positive specific relative entropy distance]\label[theorem]{theorem:MLSI_2_implies_distance_in_specific_relative_entropy}
    Let \(\nu\) be a (tempered, translation-invariant) Gibbs measure in the setting of \Cref{example:superstable_pair_potentials} and assume that \(\nu \in \Pcal_\theta\) satisfies ({\bf MLSI--b}) with some constant \(c_\nu > 0\).
    
    Then, \(\SpecEnt(\mu \,\vert\, \nu) > 0\) for all \(\mu \in \Pcal_\theta \setminus \{\nu\}\).
    In particular, there is no other (tempered, translation-invariant) Gibbs measure.
\end{theorem}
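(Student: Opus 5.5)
We follow the Donsker--Varadhan scheme sketched before \Cref{theorem:MLSI_implies_distance_in_specific_relative_entropy}, now with the (\textbf{MLSI--b}) Dirichlet form carrying the weight \(b(x,\eta)\). Fix \(\mu \in \Pcal_\theta\setminus\{\nu\}\). Since \(\SpecEnt(\mu\,\vert\,\nu)=\infty\) unless \(\mu\) is tempered, we may assume \(\mu(\Omega^\ast)=1\).

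\emph{Step 1: the test observable.} We choose a bounded local \(f\) with \(\mu[f]-\nu[f]=:\delta>0\). To make the discrete gradients harmless, we take \(f\) of a special form: a smooth bounded function of finitely many smoothed counts, for instance \(f(\eta)=g(\eta[\phi_1],\dots,\eta[\phi_k])\) with \(\phi_i\in C_c\) supported in some \(\Lambda_0\). Such functions separate distinct translation-invariant laws. We may moreover assume that \(\norm{D_x f}_\infty \le L\) and \(D_x f = 0\) for \(x\notin \Lambda_0\). We then set
\begin{align*}
    F_n=\int_{\Lambda_n} f\circ\theta_y\,\d y .
\end{align*}
Stationarity gives \(\mu[F_n]-\nu[F_n]=\delta\abs{\Lambda_n}\). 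Moreover \(\abs{D_xF_n}\le L\abs{\Lambda_0}=:K\), and \(D_xF_n\) vanishes unless \(x\in\Lambda_n+\Lambda_0\).

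\emph{Step 2: Herbst argument from (\textbf{MLSI--b}).} Put \(G=\e^{\lambda F_n}\) (bounded local) and \(\Lambda(\lambda)=\log\nu[\e^{\lambda(F_n-\nu F_n)}]\). Then
\begin{align*}
    (D_x G)(D_x\log G)=\lambda D_xF_n\,(\e^{\lambda D_xF_n}-1)\,\e^{\lambda F_n}\le \lambda K(\e^{\lambda K}-1)\,\e^{\lambda F_n}\1_{\Lambda_n+\Lambda_0}(x).
\end{align*}
Plugging this into (\textbf{MLSI--b}) yields
\begin{align*}
    \lambda\Lambda'(\lambda)-\Lambda(\lambda)\le c_\nu\,\lambda K(\e^{\lambda K}-1)\,\frac{\nu\big[\e^{\lambda F_n}\int_{\Lambda_n+\Lambda_0} b(x,\cdot)\,\d x\big]}{\nu[\e^{\lambda F_n}]}.
\end{align*}
If \(b\) were bounded, integrating \((\Lambda/\lambda)'\) would give \(\Lambda(\lambda)\le C\abs{\Lambda_n}\psi(\lambda)\) with \(\psi(\lambda)=O(\lambda^2)\) as \(\lambda\to0\).

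\emph{Step 3: controlling the weight \(b\) (main obstacle).} In the superstable setting \(b\) is unbounded, since \(h\) can be very negative where there are many distant attracting points. The plan is to bound \(\int_{\Lambda_n+\Lambda_0}b(x,\cdot)\,\d x\) by a tilted expectation of a quantity that is itself controlled. Regularity gives
\begin{align*}
    b(x,\eta)\le \exp\Big(\sum_{y\in\eta}\psi(\abs{x-y})\Big) .
\end{align*}
Superstability then gives exponential moments of local counts under tempered Gibbs measures (Ruelle bounds: \(\nu[\e^{tN_\Delta^2}]<\infty\) for small \(t\)), and these remain uniform under the tilt \(\e^{\lambda F_n}\) because \(F_n\) is bounded by \(\norm f_\infty\) per unit volume.

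An alternative route is to avoid tilted estimates entirely. The GNZ equation gives \(\nu[\int b(x,\cdot)\,u(x,\cdot+\delta_x)\,\d x]=\nu[\sum_{x\in\eta}u(x,\eta)]\). Since \(\e^{\lambda F_n(\eta+\delta_x)}\ge \e^{-\lambda K}\e^{\lambda F_n(\eta)}\), this bounds the right-hand side by \(\e^{\lambda K}\nu[\e^{\lambda F_n}N_{\Lambda_n+\Lambda_0}]/\nu[\e^{\lambda F_n}]\). I would pursue this GNZ route first. It reduces Step 3 to showing
\begin{align*}
    \nu_\lambda[N_{\Lambda_n+\Lambda_0}]\le C\abs{\Lambda_n}\qquad\text{for the tilted measure } \nu_\lambda\propto\e^{\lambda F_n}\nu,
\end{align*}
uniformly in \(n\) and small \(\lambda\). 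This bound should follow from the entropy inequality \(\nu_\lambda[N]\le t^{-1}\big(I(\nu_\lambda\,\vert\,\nu)+\log\nu[\e^{tN}]\big)\), using \(I(\nu_\lambda\,\vert\,\nu)\le 2\lambda\norm f_\infty\abs{\Lambda_n}\) and \(\log\nu[\e^{tN_{\Lambda_n+\Lambda_0}}]\le C\abs{\Lambda_n}\) from the Ruelle-type superstability estimates for tempered Gibbs measures.

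\emph{Step 4: conclusion.} With \(\Lambda(\lambda)\le C\abs{\Lambda_n}\psi(\lambda)\), Donsker--Varadhan applied to \(\lambda F_n\) gives
\begin{align*}
    \SpecEnt(\mu\,\vert\,\nu)\ge \sup_{\lambda>0}\big(\lambda\delta-C\psi(\lambda)\big)>0 ,
\end{align*}
since \(\psi(\lambda)=o(\lambda)\). The final assertion follows because, by the Gibbs variational principle in the superstable tempered setting (Georgii), every other tempered translation-invariant Gibbs measure \(\mu\) has \(\SpecEnt(\mu\,\vert\,\nu)=0\). This contradicts the strict positivity just established unless \(\mu=\nu\).
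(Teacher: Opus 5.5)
Your proposal is correct and follows the paper's route: Herbst's argument from (\textbf{MLSI--b}) applied to \(\e^{\lambda F_n}\), the GNZ equation to trade \(\int b(x,\cdot)\,\d x\) for the count \(N_{\Lambda_n+\Lambda_0}\) under the tilted measure \(\nu_\lambda\), a volume-order bound on \(\nu_\lambda[N_{\Lambda_n+\Lambda_0}]\), and Donsker--Varadhan for \(\lambda F_n\). The differences are confined to that volume-order bound and are minor: you get it from the entropy inequality together with Ruelle's superstability estimate \(\log\nu[\e^{tN_\Lambda}]\le C\abs{\Lambda}\), which is non-asymptotic and arguably cleaner, whereas the paper uses the crude density bound \(\e^{\lambda F_n}/\nu[\e^{\lambda F_n}]\le \e^{2\lambda\norm{f}_\infty\abs{\Lambda_n}}\) with a threshold split, and derives the exponential moment only asymptotically from Georgii's LDP upper bound; your initial reduction to tempered \(\mu\) is never used and can be dropped.
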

\begin{remark}
    The theorem is formulated for the case of superstable pair interactions as defined in \Cref{example:superstable_pair_potentials}, but the proof does in fact not directly rely on this specific form of the energy function. An additional input needed for this proof is just an upper bound of the form
    \begin{align*}
        \nu\big[(N_{\Lambda_n} / \abs{\Lambda_n}) \1_{N_{\Lambda_n} \,\geq\, t \abs{\Lambda_n}} \big]
        \,\lesssim\, \e^{- r(t) \abs{\Lambda_n}}
    \end{align*} where \(\sup_{t > 0} r(t) > 0\).
    It is here, in the setting of superstable pair interactions, provided by the upper bounds of an LDP for stationary empirical fields connected to the Gibbs variational principle, which is contained in \cite{Georgii1994}.
\end{remark}

Now turning to proofs, our \Cref{theorem:MLSI_implies_distance_in_specific_relative_entropy} is a direct consequence of the following two propositions.

Let us say that \(\nu\) satisfies ({\bf MGF--CI}) with constant \(c_\nu > 0\) if 
\begin{align*}
        \nu\big[\e^{\lambda (F - \nu[F])} \big]
        \leq 
        \exp\Big\{ c_{\nu} \alpha^2 \lambda \tfrac{\e^{\beta \lambda} - 1}{\beta}\Big\}.
\end{align*} for every \(F\) with \(D_x F \leq \beta\) \((\nu \otimes \d x)\)-a.s. and \(\int_{\R^d} \, \abs{D_x F}^2 \, \d x \leq \alpha^2\) \(\nu\)-a.s., for \(\alpha^2, \beta > 0\).

The first proposition is simply a restatement of \Cref{theorem:MLSI_implies_distance_in_specific_relative_entropy} with replaced assumptions:
\begin{proposition}[MGF bounds imply distance in specific relative entropy]\label[proposition]{proposition:MGF_bounds_imply_distance_in_specific_relative_entropy}
    Let \(\nu \in \Pcal_\theta\) satisfy ({\bf MGF--CI}) with some constant \(c_\nu > 0\).
    Then, \(\SpecEnt(\mu \,\vert\, \nu) > 0\) for all \(\mu \in \Pcal_\theta \setminus \{\nu\}\).
\end{proposition}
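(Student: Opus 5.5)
Given \(\mu \in \Pcal_\theta\setminus\{\nu\}\), I would run the Donsker--Varadhan argument sketched before \Cref{theorem:MLSI_implies_distance_in_specific_relative_entropy}, with a carefully chosen local observable and its space averages, and control the exponential moment with ({\bf MGF--CI}).

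\emph{Choice of the separating observable.} Since \(\mu \neq \nu\) and \(\Fcal\) is generated by counting variables, there is a bounded local \(f\) with \(\mu[f] \neq \nu[f]\). To apply ({\bf MGF--CI}) we need uniform control of \(D_x F\) and \(\int \abs{D_x F}^2\,\d x\). This fails for general bounded \(f\), because adding one point at \(x\) can change \(f\circ\theta_y\) for all \(y\) with \(x-y\) in the support window, and the relevant count is not controlled. I would therefore approximate: the distributions on \(\Fcal_\Lambda\) are determined by functionals of the form \(g(N_{\Delta_1},\dots,N_{\Delta_k})\) with \(g\) bounded and Lipschitz on \(\mathbb{N}^k\) (indeed one may take truncations \(N_\Delta\wedge M\) and compositions with \(\min\) and smooth functions, e.g.\ \(\e^{-\sum t_i N_{\Delta_i}}\), which are Laplace functionals and determine the law). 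So I pick \(f\), supported in \(\Lambda_0=[-r,r]^d\), with \(\abs{D_x f}\le L\,\1_{x\in\Lambda_0}\), and with \(\mu[f]-\nu[f]=\delta>0\) (replacing \(f\) by \(-f\) if needed). The Laplace functional \(\exp(-\int \phi\,\d\eta)\) with \(0\le\phi\le1\) continuous and compactly supported is a natural concrete choice; it satisfies \(\abs{D_x f}\le \phi(x)\).

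\emph{Concentration of the averages.} Set \(F_n=\int_{\Lambda_n} f\circ\theta_y\,\d y\). Then \(D_xF_n=\int_{\Lambda_n}(D_{x-y}f)\circ\theta_y\,\d y\), so \(\abs{D_x F_n}\le L\abs{\Lambda_0}=:\beta\) for all \(x\), and \(D_xF_n=0\) unless \(x\in\Lambda_n+\Lambda_0\). Hence \(\int\abs{D_xF_n}^2\d x\le \beta^2\abs{\Lambda_n+\Lambda_0}=:\alpha_n^2\), which is of order \(\abs{\Lambda_n}\). By ({\bf MGF--CI}) with \(\lambda\) fixed,
\begin{align*}
\log\nu\big[\e^{\lambda(F_n-\nu[F_n])}\big]\le c_\nu\,\alpha_n^2\,\lambda\,\tfrac{\e^{\beta\lambda}-1}{\beta}.
\end{align*}

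\emph{Conclusion.} Donsker--Varadhan applied to \(\lambda F_n\) on \(\Fcal_{\Lambda_n+\Lambda_0}\), together with stationarity (\(\mu[F_n]-\nu[F_n]=\abs{\Lambda_n}\delta\)), gives
\begin{align*}
\tfrac{1}{\abs{\Lambda_n}}\RelEnt_{\Lambda_n+\Lambda_0}(\mu\,\vert\,\nu)\ge \lambda\delta - c_\nu\tfrac{\alpha_n^2}{\abs{\Lambda_n}}\lambda\tfrac{\e^{\beta\lambda}-1}{\beta}.
\end{align*}
Since \(\alpha_n^2/\abs{\Lambda_n}\to\beta^2\) and \(\lambda\tfrac{\e^{\beta\lambda}-1}{\beta}=O(\lambda^2)\) as \(\lambda\downarrow0\), choosing \(\lambda\) small makes the right-hand side at least \(\lambda\delta/2>0\). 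Because \(\abs{\Lambda_n+\Lambda_0}/\abs{\Lambda_n}\to1\), the limit defining \(\SpecEnt\) yields \(\SpecEnt(\mu\,\vert\,\nu)\ge\lambda\delta/2>0\).

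\emph{Main obstacle.} The hard step is producing a separating \(f\) with deterministic bounds on \(D_xf\); the Laplace-functional choice should resolve it. A secondary issue is measurability and integrability of \(F_n\), which is handled by boundedness of \(f\).
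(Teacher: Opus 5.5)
Your proposal is correct and follows essentially the same route as the paper: a Laplace-functional observable \(\pm\e^{-\int\phi\,\d\eta}\) separating \(\mu\) and \(\nu\), its space averages \(F_n\), Donsker--Varadhan on \(\Fcal_{\Lambda_n+\Lambda_0}\) combined with ({\bf MGF--CI}), and a small \(\lambda\) (the paper takes a supremum over \(\lambda>0\) instead). The only cosmetic difference is how you control \(\int\abs{D_xF_n}^2\,\d x\): you bound it by \(\beta^2\abs{\Lambda_n+\Lambda_0}\) using the support of \(D_xF_n\), whereas the paper uses Young's convolution inequality in \Cref{lemma:L2_norm_gradient_of_averaged_f}; both give order \(\abs{\Lambda_n}\), which is all the argument needs.
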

The second proposition shows that an MLSI for \(\nu\) implies MGF bounds under \(\nu\) for certain observables. Hence, \Cref{proposition:MGF_bounds_imply_distance_in_specific_relative_entropy} is applicable with the assumptions of \Cref{theorem:MLSI_implies_distance_in_specific_relative_entropy}. The derived bounds are of course equivalent to corresponding concentration-of-measure inequalities.
\begin{proposition}[MGF bounds from MLSI--1]\label[proposition]{proposition:centered_MGF_bound_from_MLSI}
    Assume that \(\nu \in \Pcal_\theta\) satisfies ({\bf MLSI--1}) with some constant \(c_\nu > 0\).
    Then, it also satisfies ({\bf MGF--CI}) with constant \(c_\nu > 0\).
\end{proposition}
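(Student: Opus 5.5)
This is the Herbst argument adapted to the Poisson-type MLSI. Fix \(F\) with \(D_x F \le \beta\) \((\nu\otimes \d x)\)-a.s. and \(\int_{\R^d} \abs{D_x F}^2 \,\d x \le \alpha^2\) \(\nu\)-a.s. First assume \(F\) is bounded and local, then pass to the general case by truncation and localization.

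For \(\lambda \ge 0\), set \(G = \e^{\lambda F}\) and \(M(\lambda) = \nu[\e^{\lambda F}]\). Since \(G\) is bounded, local and positive, ({\bf MLSI--1}) applies:
\begin{align*}
    \lambda M'(\lambda) - M(\lambda)\log M(\lambda)
    = \mathrm{Ent}_\nu[\e^{\lambda F}]
    \le c_\nu\, \nu\bigg[\int_{\R^d} (D_x \e^{\lambda F})(D_x \lambda F)\,\d x\bigg].
\end{align*}
Pointwise, with \(u = \lambda D_x F \le \lambda\beta\), the integrand equals \(\e^{\lambda F}(\e^{u}-1)u\).

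The key elementary estimate is
\[
(\e^{u}-1)u \le u^2\,\frac{\e^{\lambda\beta}-1}{\lambda\beta} \quad \text{for } u \le \lambda\beta .
\]
This holds because \(u\mapsto(\e^u-1)/u\) is increasing on \(\R\) (positive, equal to \(1\) at \(0\)). Consequently
\[
\int_{\R^d} (D_x \e^{\lambda F})(D_x \lambda F)\,\d x \le \e^{\lambda F}\,\lambda^2\alpha^2\,\frac{\e^{\lambda\beta}-1}{\lambda\beta}.
\]
This gives the differential inequality
\[
\lambda M' - M\log M \le c_\nu \alpha^2 \lambda\,\frac{\e^{\lambda\beta}-1}{\beta}\, M .
\]

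Now set \(K(\lambda) = \lambda^{-1}\log M(\lambda)\). Dividing by \(\lambda^2 M\) gives
\[
K'(\lambda) \le c_\nu\alpha^2\,\frac{\e^{\lambda\beta}-1}{\lambda\beta}, \qquad K(0^+) = \nu[F].
\]
Integrating,
\[
\log \nu\big[\e^{\lambda(F-\nu[F])}\big] \le c_\nu\alpha^2\lambda \int_0^\lambda \frac{\e^{s\beta}-1}{s\beta}\,\d s .
\]
Because the integrand is increasing in \(s\), the integral is at most \(\lambda\cdot\frac{\e^{\lambda\beta}-1}{\lambda\beta} = \frac{\e^{\lambda\beta}-1}{\beta}\). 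This yields exactly the ({\bf MGF--CI}) bound \(\exp\{c_\nu\alpha^2\lambda(\e^{\beta\lambda}-1)/\beta\}\).

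For \(\lambda<0\) the bound \(\e^{\lambda\beta}-1\le 0\) makes the right-hand side nonpositive, so I expect the statement is meant for \(\lambda\ge 0\) (or \(F\) replaced by \(-F\) with its own \(\beta\)). I would state this restriction explicitly.

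The main obstacle is the approximation step, since the MLSI is only assumed for bounded local \(F\). For a bounded local \(F\) the argument is rigorous: \(M\) is smooth and differentiation under \(\nu\) is justified. For a general \(F\) satisfying the gradient hypotheses, I would first truncate, using \(F\wedge k\vee(-k)\). This preserves \(D_xF\le\beta\) and \(\abs{D_xF}^2\) pointwise, since truncation is monotone and \(1\)-Lipschitz. Then pass \(k\to\infty\) using Fatou's lemma on the left-hand side and convergence of \(\nu[F_k]\). Locality is more delicate. In the application, the \(F_n\) are space averages of local observables and hence already local, so I would restrict ({\bf MGF--CI}) to bounded local \(F\) if necessary, which suffices for \Cref{proposition:MGF_bounds_imply_distance_in_specific_relative_entropy}.
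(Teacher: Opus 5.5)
Your proof is correct and is essentially the paper's Herbst argument (your \(K\) is the paper's \(H(\lambda)=\lambda^{-1}\log\nu[\e^{\lambda F}]\)). The only difference is the elementary estimate: you use monotonicity of \(u\mapsto(\e^u-1)/u\) to get \((\e^u-1)u\le u^2(\e^{\lambda\beta}-1)/(\lambda\beta)\), while the paper uses the mean-value bound \((\e^u-1)u\le u^2\e^{\lambda\beta}\). The paper writes this via \(\e^{\lambda\abs{D_xF}}\), which tacitly needs \(\abs{D_xF}\le\beta\), whereas your version really uses only \(D_xF\le\beta\); after integration both give the same bound.

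One slip in your side remark: for \(\lambda<0\) the exponent \(c_\nu\alpha^2\lambda(\e^{\beta\lambda}-1)/\beta\) is positive, not nonpositive. Restricting to \(\lambda\ge0\) is still right (and is all the paper uses), because \(D_xF\le\beta\) gives no upper control on \(\lambda D_xF\) for \(\lambda<0\); indeed \(F=-(N_\Lambda\wedge k)\) under \(\pi\) violates the bound for large \(\abs{\lambda}\) and \(k\).
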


The proofs of \Cref{proposition:MGF_bounds_imply_distance_in_specific_relative_entropy} and \Cref{proposition:centered_MGF_bound_from_MLSI} are given in the next \Cref{section:proofs}, specifically in \Cref{section:proofs_MLSI}. Subsequently and building on the preceding arguments, \Cref{theorem:MLSI_2_implies_distance_in_specific_relative_entropy} is proven in \Cref{section:proofs_MLSI_2}.

As a concluding remark, we want to say that this result further solidifies the need for different and new techniques to understand {\em quantitatively} trend to equilibrium of stochastic dynamics such as the considered birth-and-death dynamics and especially decay of specific relative entropy in regimes of non-uniqueness of (the reversible) Gibbs measures. Some of the {\em qualitative} basic building blocks in regimes of non-uniqueness were already laid in \cite{JKSZ25, JKSZ26}.

\section{Proofs}\label[section]{section:proofs}
The proofs of \Cref{proposition:MGF_bounds_imply_distance_in_specific_relative_entropy}, \Cref{proposition:centered_MGF_bound_from_MLSI} and hence \Cref{{theorem:MLSI_implies_distance_in_specific_relative_entropy}} are contained in \Cref{section:proofs_MLSI}. Enhancing the approach there with some new ideas to deal with the unbounded birth rate \(b\), \Cref{section:proofs_MLSI_2} contains the proof of \Cref{theorem:MLSI_2_implies_distance_in_specific_relative_entropy}.

\subsection{Dirichlet form with birth rate \(1\)}\label[section]{section:proofs_MLSI}
In our first proof, we detail how we adapt ideas of \cite{CMRU2020, CR2022} to our setting. Heuristics regarding this were already given in \Cref{section:introduction_and_main_theorems}.
\begin{proof}[Proof of \Cref{proposition:MGF_bounds_imply_distance_in_specific_relative_entropy}]
    Consider \(\mu \in \Pcal_\theta \setminus \{\nu\}\).
    Then, we can separate \(\mu\) and \(\nu\) as follows.
    There is an \(r > 0\) and \(g \geq 0\), \(g \in C_c(\R^d)\) with \(\mathrm{supp}\, g \subseteq \Lambda_r\) such that
    \begin{align*}
        \varrho 
        := \mu\big[f \big] - \nu\big[ f \big]
        > 0
    \end{align*}
    for
    \begin{align*}
        f(\eta)
        = \pm \e^{-\sum_{x \in \eta} g(x)},
    \end{align*} where the sign is chosen to make \(\varrho\) positive.

    Now set
    \begin{align}\label{equation:test_function_F_n}
        F_n := \int_{\Lambda_n} \, (f \circ \theta_x) \, \d x.
    \end{align} 
    Then,
    \begin{align*}
        (D_z f)(\eta)
        = (e^{-g(z)} - 1) \, f(\eta)
    \end{align*} and hence
    \begin{align*}
        \abs{(D_z F_n)(\eta)}
        &= \bigg\vert \int_{\Lambda_n} \, (D_{z-x} f)(\theta_x \eta) \, \d x \bigg\vert
        \leq \int_{\Lambda_n} \,  \abs{1 - \e^{-g(z-x)}} \abs{f(\theta_x \eta)} \, \d x \\
        &\leq \int_{\R^d} \,  \abs{1 - \e^{-g(x)}} \, \d x
        =: \beta.
    \end{align*} With \(\psi(x) := \sup_{\omega} \vert (D_{x} f)(\omega) \vert\), we also get the following bound which we prove in \Cref{lemma:L2_norm_gradient_of_averaged_f} below:
    \begin{align*}
        \int_{\R^d} \abs{D_z F_n}^2 \d z
        \leq \norm{ \psi }_{L^1(\R^d)}^2 \abs{\Lambda_n} 
        \leq \beta^2 \abs{\Lambda_n} =: \alpha_n^2
    \end{align*}

    Then, by the Donsker--Varadhan formula for the relative entropy in combination with \(\Fcal_{\Lambda_{n+r}}\)-measurability of \(F\), and by translation-invariance
    \begin{align*}
        \frac{1}{\abs{\Lambda_n}} I_{\Lambda_{n+r}}( \mu \,\vert\, \nu )
        &\geq \frac{1}{\abs{\Lambda_n}} \Big\{ \Big( \mu\big[F_n \big] - \nu\big[F_n \big] \Big) - \log \nu\big[ \e^{F_n - \nu[F_n]} \big] \Big\} \\
        &= \Big( \mu\big[f \big] - \nu\big[f \big] \Big) 
        - \frac{1}{\abs{\Lambda_n}} \log \nu\big[ \e^{F_n - \nu[F_n]} \big].
    \end{align*}  
    Now replacing \(f\) by \(\lambda f\) for \(\lambda > 0\) yields
    \begin{align*}
        \tfrac{1}{\abs{\Lambda_n}} I_{\Lambda_{n+r}}( \mu \,\vert\, \nu ) 
        \,\geq\, \sup_{\lambda > 0} \Big\{
        \varrho \lambda
        - \tfrac{1}{\abs{\Lambda_n}} \log \nu\big[ \e^{\lambda (F_n - \nu[F_n])} \big]
        \Big\}.
    \end{align*}
    From the assumption ({\bf MGF--CI}) on \(\nu\), we get
    \begin{align*}
        \nu\big[ \e^{\lambda (F_n - \nu[F_n])} \big] 
        \leq \exp\Big\{ c_{\nu} \alpha_n^2 \lambda \tfrac{\e^{\beta \lambda} - 1}{\beta}\Big\}
        = \exp\Big\{ c_{\nu} \beta^2 \abs{\Lambda_n} \lambda \tfrac{\e^{\beta \lambda} - 1}{\beta}\Big\}.
    \end{align*}
    Finally,
    \begin{align*}
        \frac{1}{\abs{\Lambda_n}} I_{\Lambda_{n+r}}( \mu \,\vert\, \nu ) 
        \,\geq \, 
        \sup_{\lambda > 0} \Big\{
        \varrho \lambda
        - (c_{\nu} \beta^2 \tfrac{\e^{\beta \lambda} - 1}{\beta \lambda} ) \lambda^2
        \Big\} 
        > 0,
    \end{align*} independently of \(n \in \mathbb{N}\), giving
    \begin{align*}
        \SpecEnt( \mu \,\vert\, \nu ) 
        = \inf_{n \in \mathbb{N}} \tfrac{1}{\abs{\Lambda_{n+r}}} I_{\Lambda_{n+r}}( \mu \,\vert\, \nu ) 
        = \inf_{n \in \mathbb{N}} \tfrac{1}{\abs{\Lambda_{n}}} I_{\Lambda_{n+r}}( \mu \,\vert\, \nu ) 
        > 0.
    \end{align*}
\end{proof}

The following technical lemma was needed in the above proof. The statement and proof are very close to \cite[Lemma A.1]{CMRU2020} in the lattice setting, but we still incorporate it for convenience of the reader.
\begin{lemma}\label[lemma]{lemma:L2_norm_gradient_of_averaged_f}
    Let \(f\) be a bounded local function and 
    \begin{align*}
        F
        := \int_{\Lambda} \, (f \circ \theta_x) \, \d x.
    \end{align*}
    Then
     \begin{align*}
        \int_{\R^d} \abs{D_z F}^2 \d z
        \leq \norm{ \psi }_{L^1(\R^d)}^2 \abs{\Lambda} 
    \end{align*} with \(\psi(x) := \sup_{\omega} \vert (D_{x} f)(\omega) \vert\).
\end{lemma}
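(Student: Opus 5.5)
We compute the discrete gradient of \(F\) pointwise and then integrate. Since \(f\) is bounded and local, differentiation passes under the integral, and translation covariance of the add-one operator gives \((D_z (f\circ\theta_x))(\eta) = (D_{z-x} f)(\theta_x \eta)\). Indeed, \(\theta_x(\eta+\delta_z) = \theta_x\eta + \delta_{z-x}\). Hence
\begin{align*}
    \abs{(D_z F)(\eta)}
    \leq \int_{\Lambda} \abs{(D_{z-x} f)(\theta_x\eta)} \, \d x
    \leq \int_{\Lambda} \psi(z-x) \, \d x
    = (\1_\Lambda \ast \psi)(z),
\end{align*}
and this bound holds uniformly in \(\eta\). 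Measurability of \(\psi\) is the only mild technical issue. One can avoid it by replacing \(\psi\) with any measurable majorant, or note that for \(f\) depending on \(\eta_{\Lambda_r}\) we have \(\psi = 0\) outside \(\Lambda_r\), so \(\psi\) is effectively compactly supported. We will simply assume \(\psi \in L^1(\R^d)\), since otherwise there is nothing to prove.

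The second step is an \(L^2\) bound for a convolution, which we obtain from Young's inequality: \(\norm{\1_\Lambda \ast \psi}_{L^2} \leq \norm{\1_\Lambda}_{L^2}\norm{\psi}_{L^1} = \abs{\Lambda}^{1/2}\norm{\psi}_{L^1}\). Squaring gives
\begin{align*}
    \int_{\R^d} \abs{D_z F}^2 \, \d z \leq \norm{(\1_\Lambda\ast\psi)}_{L^2}^2 \leq \norm{\psi}_{L^1(\R^d)}^2 \abs{\Lambda}.
\end{align*}
To keep the argument self-contained, we can prove the needed case of Young's inequality directly by Cauchy--Schwarz with respect to the finite measure \(\psi(z-x)\,\d x\):
\[
    (\1_\Lambda\ast\psi)(z)^2 \leq \norm{\psi}_{L^1}\int \1_\Lambda(x)\,\psi(z-x)\,\d x.
\]
Integrating in \(z\) and applying Fubini then yields \(\norm{\psi}_{L^1}^2\abs{\Lambda}\).

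No step is a real obstacle. The only care needed is in justifying the interchange of \(D_z\) with the \(\d x\)-integral, which is immediate because \(D_z\) is just a difference of two evaluations of the integrand and \(f\) is bounded, and in the translation identity for \(D_z\), whose sign convention must be checked against the definition \(\theta_x\omega = \sum_i \delta_{x_i - x}\).
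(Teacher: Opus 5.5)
Your proposal is correct and follows essentially the same route as the paper: the same pointwise bound $\abs{(D_z F)(\eta)} \leq (\psi \ast \1_{\Lambda})(z)$ via the translation identity, followed by Young's inequality $\norm{\psi \ast \1_\Lambda}_{L^2} \leq \norm{\psi}_{L^1}\norm{\1_\Lambda}_{L^2}$. Your Cauchy--Schwarz derivation of that case of Young's inequality and your remarks on the sign convention and on measurability of $\psi$ are fine additions; the paper does not discuss these points.
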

\begin{proof}
    We have
    \begin{align*}
        \abs{(D_z F)(\eta)}
        &= \bigg\vert \int_{\Lambda} \, f(\theta_x \eta + \delta_{z-x}) \, \d x - \int_{\Lambda} \, f(\theta_x \eta) \, \d x \bigg\vert \\
        &\leq \int \, \psi(z-x)  \, \1_{\Lambda}(x) \, \d x
        = (\psi \ast \1_{\Lambda})(z)
    \end{align*} 
    Young's convolution inequality now implies
    \begin{align*}
        \int_{\R^d} \abs{D_z F}^2 \d z 
        &\leq \norm{\psi \ast \1_{\Lambda}}_{L^2(\R^d)}^2 
        \leq \norm{ \psi }_{L^1(\R^d)}^2  \norm{ \1_{\Lambda} }_{L^2(\R^d)}^2 \\
        &= \norm{ \psi }_{L^1(\R^d)}^2 \abs{\Lambda} .
    \end{align*}
\end{proof}

Finally, we show how to convert MLSI into corresponding MGF / concentration bounds.
\begin{proof}[Proof of \Cref{proposition:centered_MGF_bound_from_MLSI}]
    We follow the so-called Herbst's argument, as seen in the proof of e.g.\ \cite[Proposition 3.1]{Wu2000}.
    Set
    \begin{align*}
        H(\lambda)
        := \frac{1}{\lambda} \log \nu\big[\e^{\lambda F} \big]
    \end{align*}
    and notice that \(H(0^+) = \nu[F]\) as well as
    \begin{align*}
        H'(\lambda)
        = \frac{\mathrm{Ent}_{\nu}[e^{\lambda F}]}{\lambda^2 \nu[\e^{\lambda F}]}.
    \end{align*}
    Now, 
    \begin{align*}
        \mathrm{Ent}_{\nu}[e^{\lambda F}]
        &\leq c_\nu \, \nu\bigg[\int_{\R^d} \, \, (D_x \e^{\lambda F}) \, (D_x \lambda F) \, \d x \bigg]
        \leq c_\nu \, \nu\bigg[\int_{\R^d} \, \, (D_x \lambda F)^2 \e^{\lambda F} \e^{\lambda \abs{D_x F}}  \, \d x \bigg] \\
        &\leq c_\nu \alpha^2 \lambda^2 \e^{\beta \lambda} \nu\big[\e^{\lambda F} \big],
    \end{align*} where we used the mean-value theorem for the second inequality.
    It follows that
    \begin{align*}
        H'(\lambda)
        \leq  c_\nu \alpha^2 \e^{\beta \lambda}
    \end{align*} and hence
    \begin{align}\label{equation:MGF_bound_Herbst_argument}
        \nonumber\nu\big[\e^{\lambda F} \big]
        &= \exp\big\{ \lambda H(\lambda) \big\} 
        = \exp\Big\{ \lambda H(0) + \lambda \int_{0}^{\lambda} H'(\widetilde{\lambda}) \d\widetilde{\lambda} \Big\} \\ 
        &\leq \exp\Big\{ \lambda \,\nu[F] + c_{\nu} \alpha^2 \lambda \tfrac{\e^{\beta \lambda} - 1}{\beta} \Big\}.
    \end{align}
\end{proof}

\subsection{Dirichlet form with birth rate \(b\)}\label[section]{section:proofs_MLSI_2}
We will finally analyze the case of ({\bf MLSI--b}), where the generally unbounded birth rate / Papangelou intensity \(b\) appears in the corresponding Dirichlet form. It is recommended to first read the proofs in \Cref{section:proofs_MLSI} to facilitate understanding.

\begin{proof}[Proof of \Cref{theorem:MLSI_implies_distance_in_specific_relative_entropy}]
    We follow the reasoning of the proof of \Cref{proposition:MGF_bounds_imply_distance_in_specific_relative_entropy} and adapt the technique of the proof of \Cref{proposition:centered_MGF_bound_from_MLSI} to the specific \(F_n\) as in \Cref{equation:test_function_F_n}.
    Indeed, we see as in \Cref{equation:MGF_bound_Herbst_argument} that 
    \begin{align*}
         \nu\big[\e^{\lambda (F_n - \nu[F_n])} \big]
         \leq \exp\bigg\{ c_{\nu} \,\lambda \e^{\beta \lambda} \int_{0}^{\lambda} \, K_{n}(\widetilde{\lambda}) \,\d\widetilde{\lambda} \bigg\}
    \end{align*} with
    \begin{align*}
        K_{n}(\lambda)
        := \frac{1}{\nu\big[\e^{\lambda F_n} \big]} \,\nu\bigg[\e^{\lambda F_n} \bigg(\int_{\R^d} \, b(z, \cdot) \abs{D_z F_n}^2 \, \d z  \bigg) \bigg].
    \end{align*}
    Our task is now to give bounds for \(K_n(\lambda) / \abs{\Lambda_n}\) which are somewhat uniform in \(n\) and valid for small \(\lambda\). 
    
    Define \(\varphi(x) = 1 - \e^{-g(x)}\) and observe that
    \begin{align*}
        &\nu\bigg[\e^{\lambda F_n} \bigg(\int_{\R^d} \, b(z, \cdot) \abs{D_z F_n}^2 \, \d z  \bigg) \bigg] 
        \leq \nu\bigg[\e^{\lambda F_n} \bigg(\int_{\R^d} \, b(z, \cdot) \abs{\varphi \ast \1_{\Lambda_n}(z)}^2 \, \d z  \bigg) \bigg] \\
        &= \nu\bigg[\sum_{z \in \eta} \e^{\lambda F_n(\eta - \delta_z)}  \abs{\varphi \ast \1_{\Lambda_n} (z)}^2 \bigg] 
        = \nu\bigg[\e^{\lambda F_n(\eta)} \sum_{z \in \eta}  \e^{\lambda (D_z F_n)(\eta - \delta_z)} \abs{\varphi \ast \1_{\Lambda_n} (z)}^2 \bigg]  \\
        &\leq \e^{\beta \lambda } \, \nu\bigg[\e^{\lambda F_n(\eta)} \sum_{z \in \eta} \abs{\varphi \ast \1_{\Lambda_n} (z)}^2 \bigg] 
        \leq \beta \e^{\beta \lambda }  \, \nu\bigg[\e^{\lambda F_n} \, N_{\Lambda_n + \mathrm{supp}\, \varphi} \bigg], 
    \end{align*} where the first identity is by the GNZ equations.

    Now define the {\em stationary empirical fields } \(R_{n, \omega} \in \Pcal_\theta\) by
    \begin{align*}
        R_{n, \omega}
        := \frac{1}{\abs{\Lambda_n}} \int_{\Lambda_n} \, \delta_{\theta_x \omega^{(n)}} \, \d x,
    \end{align*} where 
    \begin{align*}
        \omega^{(n)} 
        := \sum_{\substack{x \in \omega_{\Lambda_n}, \\ i \in \Z^d}} \delta_{x + 2n i}
    \end{align*} is the \(\Lambda_n\)-periodization of \(\omega_{\Lambda_n}\).

    Now let \(k\) be the smallest \(j \in \mathbb{N}\) such that \(\Lambda_{n} + \mathrm{supp} \, g \subseteq \Lambda_{n+j}\) and let us use the following crude bound valid for any \(T > 0\) and \(\lambda \in (0,1)\):
    \begin{align*}
        &\frac{1}{\nu\big[\e^{\lambda F_n} \big]} \,\nu\bigg[\e^{\lambda F_n} \, N_{\Lambda_n + \mathrm{supp}\, \varphi} \bigg] 
        \leq \frac{1}{\nu\big[\e^{\lambda F_n} \big]} \,\nu\bigg[\e^{\lambda F_n} \, N_{\Lambda_n + k} \bigg]  \\
        &= \abs{\Lambda_{n+k}} \frac{1}{\nu\big[\e^{\lambda F_n} \big]} \,\nu\bigg[\e^{\lambda F_n} \, R_{n+k, \cdot}[N_C] \bigg] \\
        &\leq \abs{\Lambda_{n+k}} \bigg\{ T + \e^{2 \lambda \norm{f}_\infty \abs{\Lambda_{n+k}} } \, \nu\Big[\abs{\Lambda_{n+k}} R_{n+k, \cdot}[N_{C}] \1_{R_{n+k, \cdot}[N_C] \geq T} \Big] \bigg\} \\
        &\leq  \abs{\Lambda_{n+k}} \bigg\{ T + \e^{- (T - 2 \norm{f}_\infty) \abs{\Lambda_{n+k}} } \, \nu\Big[\e^{2 \abs{\Lambda_{n+k}} R_{n+k, \cdot}[N_C]}\Big] \bigg\} 
        \end{align*} with \(C = [0,1]^d\).
        
    We will now apply the upper bound of the LDP \cite[Theorem 3 (b)]{Georgii1994} for \(R_{n,\cdot}\) under \(\nu\) with rate function \(I_{1,1}\), for all \(P \in \Pcal_\theta\) given by
    \begin{align*}
        I_{1,1}(P)
        = \SpecEnt(P \,\vert\, \pi) +  H(P) - \min_{Q \in \Pcal_\theta} [\SpecEnt(Q \,\vert\, \pi) +  H(Q)] 
        \geq 0
    \end{align*} where
    \begin{align*}
        H(P)
        := \lim_{n \to \infty} \tfrac{1}{\abs{\Lambda_n}} P[H_{\Lambda_n}].
    \end{align*}
    We get that 
    \begin{align}\label{equation:tilted_LDP}
         \limsup_{n \to \infty} \tfrac{1}{\abs{\Lambda_{n+k}}} \log \nu\Big[\e^{2 \abs{\Lambda_{n+k}} R_{n+k, \cdot}[N_C]}\Big] 
         \leq - \inf_{P \in \Pcal_\theta} J(P)
    \end{align} with
    \begin{align*}
        J(P)
        &:= I_{1, 1}(P) - 2 P[N_C] \\
        &= \SpecEnt(P \,\vert\, \pi) +  H(P) - 2 P[N_C] - c,
    \end{align*} where \(c \in \mathbb{R}\) is a finite constant.
    We now only have to argue that the infimum on the right-hand side of \Cref{equation:tilted_LDP} is not \(-\infty\).
    To see that: stability of \(H\) yields
    \begin{align*}
        H(P)
        \geq - b P[N_C]
    \end{align*} for some fixed \(b > 0\).
    But, for any \(a > 0\),
    \begin{align*}
        a P[N_C]
        = \tfrac{1}{\abs{\Lambda_n}}  P[ a N_{\Lambda_n}]
        \,\leq\, \tfrac{1}{\abs{\Lambda_n}} I(P_{\Lambda_n} \,\vert\, \pi_{\Lambda_n}) + \tfrac{1}{\abs{\Lambda_n}} \log \pi[\e^{a N_{\Lambda_n}}]
    \end{align*} and hence
    \begin{align*}
        \SpecEnt(P \,\vert\, \pi)
        \geq a P[N_C] - (\e^{a} - 1).
    \end{align*}
     It follows, with e.g.\ \(a = b+2\), that
    \begin{align*}
        J(P)
        \,&\geq\, (a-b-2) P[N_C] - (\e^{a} - 1) - c \\
        &\geq - (\e^{b+2} - 1) - c.
    \end{align*} for all \(P \in \Pcal_\theta\).
    
    If we now choose \(T = 2 (\norm{f}_\infty + \e^{b+2} - 1 + c)\), the statement of the theorem follows again along the same lines as in the proof of \Cref{proposition:MGF_bounds_imply_distance_in_specific_relative_entropy} because 
    \begin{align*}
        K_n(\lambda)
        \leq  (T+1) \beta \e^{\beta \lambda} \abs{\Lambda_{n+k}}
    \end{align*} for \(n\) large enough, uniformly in \(\lambda \in (0,1)\).
\end{proof}

\bibliographystyle{alpha}
\bibliography{references}

\end{document}